\theoremstyle{plain}
\newtheorem{theorem}{Theorem}
\newtheorem{proposition}[theorem]{Proposition}
\newtheorem{lemma}[theorem]{Lemma}
\theoremstyle{definition}
\newtheorem{definition}[theorem]{Definition}
\newtheorem*{question}{Main Question}
\title{Relations between M\"obius and coboundary polynomial}
\author{Relinde Jurrius}
\begin{document}

\maketitle

\begin{abstract}
It is known that, in general, the coboundary polynomial and the M\"obius polynomial of a matroid do not determine each other. Less is known about more specific cases. In this paper, we will try to answer if it is possible that the M\"obius polynomial of a matroid, together with the M\"obius polynomial of the dual matroid, define the coboundary polynomial of the matroid. In some cases, the answer is affirmative, and we will give two constructions to determine the coboundary polynomial in these cases.
\end{abstract}

\section{Introduction}

When studying invariant polynomials of matroids, much attention is given to the Tutte polynomial. Much of other polynomials associated to graphs, arrangements, linear codes and matroids turn out to be an evaluation of the Tutte polynomial, or define the Tutte polynomial. Sometimes the polynomials and the Tutte polynomial determine each other. \\
The latter is, for simple matroids, the case with the coboundary polynomial. A polynomial that did not attract too many attention, is the M\"obius polynomial. It 
is not equivalent to the Tutte polynomial. \\
It follows that, in general, the coboundary polynomial and the M\"obius polynomial do not determine each other.  Less is known about more specific cases. In this paper, we will try to answer if it is possible that the M\"obius polynomial of a matroid, together with the M\"obius polynomial of the dual matroid, define the coboundary polynomial of the matroid. In some cases, the answer is affirmative, and we will give two constructions to determine the coboundary polynomial in these cases. \\
Much of the theory we use for matroids, has originated from coding theory. For understanding the results about matroid theory, it is not necessary to know this origin, but it is included in this paper as a motivation for the techniques we use. This material is found in Section \ref{ewe} and the beginning of Section \ref{zeta}.

\section{Extended weight enumerator}\label{ewe}

Let $C$ be a linear $[n,k]$ code over $\mathbb{F}_q$ with generator matrix $G$. Then we can form the $[n,k]$ code $C\otimes\mathbb{F}_{q^m}$ over $\mathbb{F}_{q^m}$ by taking all $\mathbb{F}_{q^m}$-linear combinations of the codewords in $C$. We call this the \emph{extension code} of $C$ over $\mathbb{F}_{q^m}$. By embedding its entries in $\mathbb{F}_{q^m}$, we find that $G$ is also a generator matrix for the extension code $C\otimes\mathbb{F}_{q^m}$. This motivates the usage of $T$ as a variable for $q^m$ in the next definition.
\begin{definition}
The \emph{extended weight enumerator} is the polynomial
\[ W_C(X,Y,T)= \sum_{w=0}^{n} A_w(T) X^{n-w}Y^w \]
where the $A_w(T)$ are integral polynomials in $T$ and $A_w(q^m)$ is the number of codewords of weight $w$ in $C\otimes\mathbb{F}_{q^m}$.
\end{definition}
See \cite{jurrius:2011a} for a proof that the $A_w(T)$ are indeed polynomials of degree at most $k$. \\

To every linear code $C$ we can associate a matroid, represented by the columns of the generator matrix $G$. Since $C$ and all its extension codes $C\otimes\mathbb{F}_{q^m}$ have the same generator matrix, they also have the same matroid associated to it. It turns out that the extended weight enumerator is completely determined by the Tutte polynomial, and vice versa. Therefore, we can extend the definition of the extended weight enumerator from codes to matroids in general. See \cite{jurrius:2011a} for details.

\section{Matroids and their polynomials}

For an excellent introduction into the topic of matroids, see \cite{welsh:1976} or \cite{oxley:2011}. More about the theory of geometric lattices and the M\"obius function can be found in \cite{aigner:1979,jurrius:2011a,stanley:2007}. In \cite{white:1986} the cryptomophism between matroids and geometric lattices is discussed. \\

For a matroid $M$ with rank function $r$ and dual matroid $M^*$, we will study the following parameters:
\begin{itemize}
\item $n$, the number of elements of $M$ and $M^*$;
\item $k$, the rank of $M$;
\item $d$, the size of the smallest cocircuit in $M$ (i.e., circuit in $M^*$);
\item $d^*$, the size of the smallest circuit in $M$ (i.e., cocircuit in $M^*$).
\end{itemize}
The reason to study $d$ and $d^*$ comes from coding theory. If a matroid is representable over a finite field, there is  linear code associated to it with minimum distance $d$ and dual minimum distance $d^*$. \\

Throughout this paper, we will restrict ourselves to simple matroids, i.e., matroids that do not have loops or parallel elements. Also the dual of a matroid is assumed to be simple. This implies $d>2$ and $d^*>2$. In this case, there is a two-way equivalence between matroids and geometric lattices: we will freely change between these objects when necessary. So in the following definition of the coboundary polynomial, it would have made sense to talk about the coboundary polynomial of a matroid, but for the definition the setting of geometric lattices is more convenient.

\begin{definition}
Let $L$ be a geometric lattice. The \emph{two variable characteristic} or \emph{coboundary} polynomial in the variable $S$ and $T$ is given by
\[ \chi _L(S,T)=\sum_{x\in L}\ \sum_{x\leq y\in L}\mu(x,y)\, S^{a(x)}T^{r(L)-r(y)}, \]
where $a(x)$ is the number of atoms $a$ in $L$ such that $a \leq x$, and $r(y)$ is the rank of $y$ in $L$.
\end{definition}

For every matroid $M$, not necessarily simple, the coboundary polynomial of the associated lattice $L(M)$ is determined by the Tutte polynomial. For simple matroids, this is a two way equivalence: the coboundary polynomial of $L(M)$ determines the Tutte polynomial of the matroid $M$. This makes it possible to prove the following theorem:

\begin{theorem}\label{chi-dual}
Let $\chi_M(S,T)$ be the coboundary polynomial of a simple matroid $M$ with simple dual $M^*$. Let $\chi_{M^*}(S,T)$ be the coboundary of $M^*$. Then
\[ \chi_{M^*}(S,T)=(S-1)^nT^{-r(M)}\chi_M\left(\frac{S+T-1}{S-1},T\right). \]
\end{theorem}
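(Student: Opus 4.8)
The plan is to push everything through the Tutte polynomial, where duality is transparent, and then translate back. The excerpt already grants that for simple matroids the coboundary polynomial and the Tutte polynomial determine each other, but the proof needs the \emph{explicit} dictionary. So the first step is to establish
\[ \chi_M(S,T)=(S-1)^{r(M)}\,T_M\!\left(\frac{S+T-1}{S-1},\,S\right), \]
with $T_M$ the Tutte polynomial. The cleanest route uses Section~\ref{ewe}: the coboundary polynomial is the dehomogenised extended weight enumerator, $\chi_M(S,T)=W_M(S,1,T)$, while $W_M(X,Y,T)=(X-Y)^{r(M)}Y^{\,n-r(M)}T_M\!\big(\tfrac{X+(T-1)Y}{X-Y},\tfrac{X}{Y}\big)$ is a Tutte evaluation. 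Setting $X=S$, $Y=1$ yields the displayed identity. I would verify it once on a small matroid such as $U_{2,3}$ to be sure the two variable roles ($S$ with the atom count, $T$ with the corank) land on the correct Tutte slots.

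Granting this identity, write $k=r(M)$, so $r(M^*)=n-k$. Applying the dictionary to $M^*$ and then Tutte duality $T_{M^*}(x,y)=T_M(y,x)$ gives
\[ \chi_{M^*}(S,T)=(S-1)^{\,n-k}T_{M^*}\!\left(\tfrac{S+T-1}{S-1},S\right)=(S-1)^{\,n-k}T_M\!\left(S,\tfrac{S+T-1}{S-1}\right). \]
It then remains to show that the right-hand side of the theorem equals this.

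For that right-hand side I set $U=\frac{S+T-1}{S-1}$ and feed it into the dictionary for $\chi_M(U,T)$. The decisive simplifications are $U-1=\frac{T}{S-1}$ and $\frac{U+T-1}{U-1}=S$; the second says that $S\mapsto\frac{S+T-1}{S-1}$ is an \emph{involution} for fixed $T$, which is precisely the structural reason the identity closes up. Hence $\chi_M(U,T)=\big(\tfrac{T}{S-1}\big)^{k}T_M\!\big(S,\tfrac{S+T-1}{S-1}\big)$, and multiplying by the prefactor $(S-1)^nT^{-k}$ cancels $T^{k}$ against $T^{-k}$ and $(S-1)^{-k}$ against $(S-1)^{n}$, leaving $(S-1)^{\,n-k}T_M\!\big(S,\tfrac{S+T-1}{S-1}\big)$, which matches $\chi_{M^*}(S,T)$ above.

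I expect the main obstacle to be not the closing algebra but fixing the explicit coboundary--Tutte dictionary with the right powers of $n$ and $r(M)$ and the right placement of $S$ and $T$ in the two Tutte arguments; a slip there corrupts the prefactor. Once the dictionary and the involution $\frac{U+T-1}{U-1}=S$ are checked, Tutte duality does the real work and everything else is bookkeeping.
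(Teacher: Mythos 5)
Your proposal is correct and follows exactly the route the paper intends: its proof is the one-line instruction to combine Tutte duality $T_{M^*}(x,y)=T_M(y,x)$ with the coboundary--Tutte correspondence, which is precisely what you carry out, with the dictionary $\chi_M(S,T)=(S-1)^{r(M)}T_M\bigl(\tfrac{S+T-1}{S-1},S\bigr)$ and the involution $U\mapsto\tfrac{U+T-1}{U-1}$ made explicit. Your prefactors and the closing algebra check out, so this is a correct, fully detailed version of the paper's own argument.
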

\begin{proof}
Use the fact that the Tutte polynomial of a matroid and its dual completely determine each other, and apply the above correspondence with the coboundary polynomial.
\end{proof}

One might notice the resemblance between this theorem and the MacWilliams relations from coding theory. In fact, the coboundary polynomial of a simple matroid is the reciprocal inhomogeneous form of the extended weight enumerator of this matroid:
\[ \chi_M(S,T)=S^nW_M(1,S^{-1},T). \]
This means $\chi_i(T)=A_{n-i}(T)$. For more details, see \cite{jurrius:2011a}. \\

The second polynomial we study, is the M\"obius polynomial. It is defined on geometric lattices, but we can consider this also as a definition for simple matroids.

\begin{definition}
Let $L$ be a geometric lattice. The two variable {\em M\"obius} polynomial in the variable $S$ and $T$ is given by
\[ \mu _L(S,T)=\sum_{x\in L}\ \sum_{x\leq y\in L}\mu(x,y)\, S^{r(x)}T^{r(L)-r(y)}. \]
\end{definition}

For convenience, we often refer to the coboundary and M\"obius polynomial in the following form:
\[ \chi_M(S,T)=\sum_{i=0}^k\chi_i(T)\,S^i,\qquad\mu_M(S,T)=\sum_{i=0}^k\mu_i(T)\,S^i. \]
The polynomial $\chi_i(T)$ is sometimes referred to as the \emph{$i$-th defect polynomial}.

\section{Connections}

Some natural questions arise about the dependencies between the coboundary polynomial and M\"obius polynomial of a matroid and its dual. First of all, do the coboundary and M\"obius polynomial determine each other? The answer is ``no'', even if both the matroid and its dual are simple. Counterexamples can be found in \cite{jurrius:2011a}, Examples 58 and 60. \\
In Theorem \ref{chi-dual} we saw that the coboundary polynomials of a matroid and its dual are completely determined by each other. Does such a formula also exists for the M\"obius polynomial? To answer this, we need some more theory.
\begin{lemma}\label{rank-size}
Let $M$ be a matroid. Then for all elements $x\in M$ with $r(x)<d^*-1$, we have $|x|=r(x)$. Furthermore, if $M$ is simple, we have $a(x)=r(x)$ in the corresponding geometric lattice.
\end{lemma}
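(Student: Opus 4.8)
The plan is to establish the two assertions in turn, reducing each to the defining property of $d^*$ as the size of the smallest circuit of $M$.

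For the first assertion I would argue by contraposition, reading $x$ as a subset (in particular, an element of the lattice $L(M)$). Suppose $x$ is dependent, so that $|x| > r(x)$; then $x$ contains a circuit $C$. Since every circuit of $M$ has size at least $d^*$, we have $|C| \geq d^*$. A circuit satisfies $r(C) = |C| - 1$, because deleting any one of its elements leaves an independent set spanning $C$; hence $r(C) = |C| - 1 \geq d^* - 1$. By monotonicity of the rank function, $r(x) \geq r(C) \geq d^* - 1$. Contrapositively, the hypothesis $r(x) < d^* - 1$ forces $x$ to be independent, i.e.\ $|x| = r(x)$.

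For the second assertion I would invoke simplicity of $M$. In a simple matroid there are no loops or parallel elements, so each singleton $\{e\}$ is closed, and the atoms of $L(M)$ are precisely these rank-one flats. An atom therefore lies below the flat $x$ exactly when its element belongs to $x$, which gives $a(x) = |x|$ for every flat $x$, with no restriction on the rank. Combining this identity with the first assertion yields $a(x) = |x| = r(x)$ whenever $r(x) < d^* - 1$.

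The argument is short once the two inputs are available: the identity $r(C) = |C| - 1$ for a circuit, and the identification of atoms with singletons in a simple matroid. I expect the only delicate point to be the first assertion, where one must pass from dependence to the presence of a circuit of size at least $d^*$ and then combine the circuit-rank identity with monotonicity of the rank. Beyond keeping these inequalities straight, I anticipate no substantial obstacle.
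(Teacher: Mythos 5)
Your proof is correct and takes essentially the same approach as the paper: both arguments rest on the observation that a dependent set contains a circuit, which by definition of $d^*$ has size at least $d^*$ and hence rank at least $d^*-1$, and both handle the second claim by identifying the atoms of $L(M)$ with the singletons of a simple matroid. Your write-up merely makes explicit (via the contrapositive and the identity $r(C)=|C|-1$) the steps the paper's terser proof leaves implicit.
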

\begin{proof}
By definition, $d^*$ is the size of the smallest circuit in $M$ and thus the size of the smallest dependent set in $M$. It has rank $d^*-1$. This means all elements $x\in M$ of rank $r(x)<d^*-1$ are independent and have $|x|=r(x)$. For simple matroids, $|x|=a(x)$ in the corresponding geometric lattice.
\end{proof}
\begin{proposition}\label{mu-d*}
Given the M\"obius polynomials $\mu_M(S,T)$ of a matroid. Then we can determine the parameter $d^*$ of the matroid $M$.
\end{proposition}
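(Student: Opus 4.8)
The plan is to read off from $\mu_M(S,T)$ the Whitney numbers of the second kind $W_i$, i.e.\ the number of flats of rank $i$ in $L(M)$, and then to recognise $d^*$ as the rank at which these numbers first fall below their ``free'' value $\binom{n}{i}$.

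First I would isolate $\mu_i(T)=\sum_{x:\,r(x)=i}\sum_{y\ge x}\mu(x,y)\,T^{r(L)-r(y)}$ and examine its top-degree term in $T$. The exponent $r(L)-r(y)$ is largest exactly when $r(y)=r(x)=i$, and the only $y\ge x$ of rank $i$ is $y=x$, contributing $\mu(x,x)=1$. Hence the coefficient of $T^{k-i}$ in $\mu_i(T)$ equals the number of rank-$i$ flats, which is $W_i$. In particular $W_0=1$ and $W_1=n$ (for a simple matroid every point is an atom), while $k=\deg_S\mu_M$; so $n$, $k$ and every $W_i$ are determined by $\mu_M$.

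Next I would pin down the meaning of $d^*$ through the $W_i$. By Lemma~\ref{rank-size}, for $i<d^*-1$ every rank-$i$ flat satisfies $a(x)=r(x)=i$, so it consists of exactly $i$ atoms; conversely any $i$-subset of the ground set is independent and equals its own closure, since a point outside it lying in its closure would create a dependent set of size $i+1<d^*$, which is impossible. This gives a bijection between rank-$i$ flats and $i$-subsets, so $W_i=\binom{n}{i}$ for all $i<d^*-1$. At rank $d^*-1$ the picture breaks: choosing a minimal circuit $C$ (of size $d^*$ and rank $d^*-1$), each of the $d^*$ sets $C\setminus\{c\}$ is an independent $(d^*-1)$-set with the same closure $\mathrm{cl}(C)$. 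Since every $(d^*-1)$-subset is still independent, the closure map from $(d^*-1)$-subsets onto rank-$(d^*-1)$ flats is surjective but not injective, forcing $W_{d^*-1}<\binom{n}{d^*-1}$.

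Combining these facts, $d^*-1$ is precisely the smallest rank $i$ at which $W_i\neq\binom{n}{i}$, so $d^*=1+\min\{\,i:W_i<\binom{n}{i}\,\}$ is recovered from $\mu_M$. The step I expect to require the most care is the strict inequality $W_{d^*-1}<\binom{n}{d^*-1}$: one must verify both that the closure map onto rank-$(d^*-1)$ flats is genuinely surjective (every such flat is the closure of a basis, which is an independent $(d^*-1)$-set, and all $(d^*-1)$-sets are independent here) and that the collision produced by a minimal circuit is real — which is where the hypothesis $d^*>2$, guaranteeing a nontrivial fibre, enters.
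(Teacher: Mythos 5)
Your proof is correct and follows essentially the same route as the paper: read off the number of rank-$i$ flats as the coefficient of $S^iT^{k-i}$ (the doubly-indexed Whitney numbers with $j=k-i$), use Lemma~\ref{rank-size} to show this count equals $\binom{n}{i}$ for $i<d^*-1$, and recover $d^*$ as one plus the first rank where the count drops below $\binom{n}{i}$. You even supply two details the paper leaves implicit, namely the circuit argument establishing the strict inequality at rank $d^*-1$ and the recovery of $n$ itself from $\mu_M$ via $W_1=n$.
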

\begin{proof}
The coefficient of the term $S^iT^j$ in the M\"obius polynomial is given by
\[ \sum_{\substack{x\in L \\ r(x)=i}}\ \sum_{\substack{x\in L \\ r(x)=i}} \mu(x,y). \]
These numbers are also known as the \emph{doubly-indexed Whitney numbers of the first kind}. In the case $j=k-i$, we just count the number of elements in $L$ of rank $i$, i.e., the number of flats of rank $i$ in $M$. From Lemma \ref{rank-size} it now follows that for $i<d^*-1$ all elements of rank $i$ are flats, so there are ${n\choose i}$ of them. For $i\geq d^*-1$, the number of flats is strictly smaller then ${n\choose i}$. Therefore we can determine $d^*$ from the M\"obius polynomial of $M$.
\end{proof}
In the previously mentioned Example 58 in \cite{jurrius:2011a}, we have two matroids with the same M\"obius polynomial but with different $d$. By Proposition \ref{mu-d*}, this means that their duals cannot have the same M\"obius polynomial. This gives a negative answer to the question in \cite[\S10.5]{jurrius:2011a} if the M\"obius polynomial of a matroid and its dual are determined by each other. \\

To summarize, together with Theorem \ref{chi-dual} we know the following about the coboundary and M\"obius polynomials of a matroid and its dual:
\begin{itemize}
\item The coboundary polynomial $\chi_M(S,T)$ of a matroid and the coboundary polynomial $\chi_{M^*}(S,T)$ of the dual matroid completely determine each other.
\item The M\"obius polynomial $\mu_M(S,T)$ of a matroid does not determine the M\"obius polynomial $\mu_{M^*}(S,T)$ of the dual matroid.
\item The coboundary polynomial $\chi_M(S,T)$ does not determine the M\"obius polynomial $\mu_M(S,T)$. The same holds in the dual case.
\item The M\"obius polynomial $\mu_M(S,T)$ does not determine the coboundary polynomial $\chi_M(S,T)$.
\end{itemize}

The last three statements also hold in case $M$ and/or $M^*$ are not simple. In this paper, we will address another question between dependencies:
\begin{question}
Given the M\"obius polynomials $\mu_M(S,T)$ and $\mu_{M^*}(S,T)$ of a matroid and its dual. Do they determine $\chi_M(S,T)$?
\end{question}

We will see that, in some cases, the answer is ``yes''. Proposition \ref{mu-d*} tells us that the M\"obius polynomial gives us information about the dual of the matroid. This is the reason to ask if the M\"obius polynomial of the matroid, together with the M\"obius polynomial of its dual, determine the coboundary polynomial. \\
For completeness, note that $\mu_M(S,T)$ and $\mu_{M^*}(S,T)$ define not only $d^*$ and $d$, respectively, but also $n$ and $k$: the degree of $\mu_M(S,T)$ in $S$ is $r(M)=k$, and the degree of $\mu_{M^*}(S,T)$ in $S$ is $r(M^*)=n-k$.

\begin{theorem}\label{mu-chi}
Let $M$ be a matroid, and let the M\"obius polynomial $\mu_M(S,T)$ be given. Then part of the coboundary polynomial $\chi_M(S,T)$ is determined from this:
\[ \chi_i(T)=\left\{
\begin{array}{ll}
\mu_i(T) & \mbox{for }i<d^*-1 \\
0 & \mbox{for }n-d<i<n \\
1 & \mbox{for }i=n.
\end{array}\right. \]
\end{theorem}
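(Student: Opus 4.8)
The plan is to read off each coefficient $\chi_i(T)$ and $\mu_i(T)$ directly from the definitions. The coboundary and M\"obius polynomials differ only in the exponent carried by $S$: it is $a(x)$ in $\chi_L$ and $r(x)$ in $\mu_L$, while the inner factor $\sum_{x\le y}\mu(x,y)\,T^{r(L)-r(y)}$, which I will call the \emph{weight} $w(x)$ of the flat $x$, is identical in both. Hence $\chi_i(T)=\sum_{x:\,a(x)=i}w(x)$ and $\mu_i(T)=\sum_{x:\,r(x)=i}w(x)$, and I would handle the three ranges of $i$ separately.

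For $i<d^*-1$ I would show that the two index sets coincide, so that the two sums agree term by term. If $r(x)=i<d^*-1$, then Lemma~\ref{rank-size} gives $a(x)=r(x)=i$. Conversely, if $a(x)=i<d^*-1$, then $x$ lies above exactly $i$ atoms; since $i<d^*$ and every circuit has size at least $d^*$, these $i$ atoms form an independent set, and as $x$ is their join we get $r(x)=i$. Thus $\{x:a(x)=i\}=\{x:r(x)=i\}$, and since $w(x)$ does not depend on which statistic we use, $\chi_i(T)=\mu_i(T)$.

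For the large values of $i$ I would use the structure of the lattice. The only flat with $a(x)=n$ is the top element $\hat{1}$, since a flat containing all $n$ atoms must be their join; its weight is $w(\hat{1})=\mu(\hat{1},\hat{1})\,T^{0}=1$, so $\chi_n(T)=1$. For $n-d<i<n$ I would argue that no flat has $a(x)=i$ at all. A cocircuit is the complement of a hyperplane, so the smallest cocircuit, of size $d$, is the complement of the \emph{largest} hyperplane, which therefore contains exactly $n-d$ atoms. Every proper flat lies below some coatom (hyperplane) $H$, whence $a(x)\le a(H)\le n-d$. Consequently every flat is either $\hat{1}$ with $a(x)=n$ or proper with $a(x)\le n-d$; no flat realizes a value strictly between $n-d$ and $n$, and therefore $\chi_i(T)=0$ on that range.

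The main obstacle is this last range: it is the only part that requires genuine matroid structure beyond Lemma~\ref{rank-size}, namely the cocircuit--hyperplane duality identifying $n-d$ as the maximum number of atoms in a proper flat, together with the geometric-lattice fact that every proper flat lies below a coatom. The cases $i<d^*-1$ and $i=n$ are then essentially bookkeeping once the coefficients are interpreted as sums of flat weights $w(x)$.
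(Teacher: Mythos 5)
Your proof is correct and takes essentially the same approach as the paper's (much terser) argument: Lemma~\ref{rank-size} for the range $i<d^*-1$, the cocircuit--hyperplane duality identifying $n-d$ as the largest number of atoms in a proper flat for $n-d<i<n$, and the definition of the coboundary polynomial for $i=n$. You simply spell out details the paper leaves implicit, such as the decomposition of both polynomials into flat weights and the converse inclusion $\{x:a(x)=i\}\subseteq\{x:r(x)=i\}$ via atomisticity of geometric lattices.
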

\begin{proof}
The first equality follows from Proposition \ref{mu-d*}, the definition of the M\"obius and coboundary polynomial, and Lemma \ref{rank-size}. If $d$ is the smallest size of a cocircuit in $M$, then $n-d$ is the biggest size of a hyperplane in $M$ and thus the biggest size of a flat with rank smaller then $k$ in $M$. This implies the second equality. The third equality is obvious from the definition of the coboundary polynomial.
\end{proof}

Using this theorem, we can determine the value of $\chi_i(T)$ for $(d^*-1)+(d-1)+1=d^*+d-1$ values of $i$. This leaves $n+1-(d^*+d-1)=n-d-d^*+2$ of the $\chi_i(T)$ unknown. We can say the same about the coefficients $\chi_i^*(T)$ of the coboundary polynomial $\chi_{M^*}(S,T)$ of the dual matroid. The idea is to use Theorem \ref{chi-dual} to calculate the missing values of $\chi_i(T)$ and $\chi_i^*(T)$. We first rewrite Theorem \ref{chi-dual} to a more convenient form.

\begin{proposition}\label{chi-i-dual}
Let $\chi_i(T)$ be the coefficients of the coboundary polynomial of a simple matroid $M$ with simple dual $M^*$. Let $\chi_i^*(T)$ be the coefficients of the coboundary of $M^*$. Then
\[ T^{v-k}\sum_{i=v}^n{i\choose v}\chi_i(T)=\sum_{i=n-v}^n\chi_i^*(T),\qquad v=0,\ldots,n. \]
\end{proposition}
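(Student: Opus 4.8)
The plan is to rewrite Theorem \ref{chi-dual} as an expansion of $\chi_{M^*}(S,T)$ in powers of $(S-1)$, read off the left-hand side of the Proposition as the resulting coefficients, and then translate this $(S-1)$-adic data back into the ordinary coefficients $\chi_i^*(T)$. First I would simplify the argument of $\chi_M$ in Theorem \ref{chi-dual}, using
\[ \frac{S+T-1}{S-1}=1+\frac{T}{S-1}. \]
Combining this with the binomial expansion $\chi_M(1+W,T)=\sum_{i=0}^n\chi_i(T)(1+W)^i=\sum_{v=0}^n\left(\sum_{i=v}^n{i\choose v}\chi_i(T)\right)W^v$, taken at $W=T/(S-1)$, and clearing the powers of $S-1$ against the prefactor $(S-1)^n$, I obtain
\[ \chi_{M^*}(S,T)=\sum_{v=0}^n\left(T^{v-k}\sum_{i=v}^n{i\choose v}\chi_i(T)\right)(S-1)^{n-v}. \]
Here I use $r(M)=k$ and that $\chi_M$ has degree $n$ in $S$, the number of atoms of the simple lattice $L(M)$. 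The upshot is clean: the left-hand side of the Proposition is exactly the coefficient of $(S-1)^{n-v}$ in the $(S-1)$-adic expansion of $\chi_{M^*}(S,T)$.

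Next I would connect this expansion to the ordinary coefficients $\chi_i^*(T)$, defined by $\chi_{M^*}(S,T)=\sum_{i=0}^n\chi_i^*(T)S^i$. Writing $B_v(T)$ for the left-hand side of the Proposition, the substitution $S\mapsto S+1$ turns the display above into $\chi_{M^*}(S+1,T)=\sum_{v=0}^n B_v(T)\,S^{n-v}$, while expanding directly gives $\chi_{M^*}(S+1,T)=\sum_{i=0}^n\chi_i^*(T)(S+1)^i$. Comparing the two expressions and then summing the resulting coefficient relations over the appropriate range of degrees is what should produce the partial sums $\sum_{i=n-v}^n\chi_i^*(T)$ claimed on the right.

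The main obstacle is precisely this final reconciliation. Passing from the $(S-1)$-adic data $B_v(T)$ to the partial sums $\sum_{i=n-v}^n\chi_i^*(T)$ requires carefully tracking the binomial coefficients generated by the two expansions of $\chi_{M^*}(S+1,T)$, and then checking that, after summation, the bookkeeping yields exactly the stated unweighted form. To organise this I would work with the reciprocal polynomial $S^n\chi_{M^*}(1/S,T)$, so that the ``top'' partial sums $\sum_{i=n-v}^n\chi_i^*(T)$ become ordinary low-degree partial sums (extractable, formally, from $S^n\chi_{M^*}(1/S,T)/(1-S)$), and I expect to lean on the boundary data from Theorem \ref{mu-chi}, namely $\chi_n=1$ and $\chi_i=0$ for $n-d<i<n$ together with the dual statements, to pin down the edge terms. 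Verifying that this combinatorial identity holds for every $v=0,\ldots,n$ is the step demanding the most care, and it is where I would concentrate the detailed computation; everything before it is a routine substitution and re-expansion.
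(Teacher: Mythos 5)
Your expansion is correct, and it already contains essentially the whole proof: the step you postpone is a one-line coefficient extraction, not a delicate reconciliation. Writing $S^i=((S-1)+1)^i=\sum_j{i\choose j}(S-1)^j$ in $\chi_{M^*}(S,T)=\sum_{i=0}^n\chi_i^*(T)\,S^i$ gives
\[ \chi_{M^*}(S,T)=\sum_{v=0}^n\left(\sum_{i=n-v}^n{i\choose n-v}\chi_i^*(T)\right)(S-1)^{n-v}, \]
and comparing with your $(S-1)$-adic expansion yields
\[ T^{v-k}\sum_{i=v}^n{i\choose v}\chi_i(T)=\sum_{i=n-v}^n{i\choose n-v}\chi_i^*(T),\qquad v=0,\ldots,n. \]
The obstacle you anticipate --- massaging this into the unweighted sums $\sum_{i=n-v}^n\chi_i^*(T)$ of the statement --- is not a bookkeeping step you left undone; it is genuinely impossible, because the Proposition as printed drops the weight ${i\choose n-v}$ on the right-hand side. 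Take the self-dual uniform matroid $U_{2,4}$ ($n=4$, $k=2$, $d=d^*=3$), for which $\chi_0=(T-1)(T-3)$, $\chi_1=4(T-1)$, $\chi_2=\chi_3=0$, $\chi_4=1$. For $v=1$ the left-hand side is $T^{-1}\bigl(4(T-1)+4\bigr)=4$, while the unweighted right-hand side is $\chi_3^*+\chi_4^*=1$; the weighted form gives ${4\choose 3}\chi_4^*=4$, as it must. No appeal to the boundary data of Theorem \ref{mu-chi}, nor any summation over several values of $v$, can bridge this, since the relation has to hold for every simple matroid separately; your plan of working with the reciprocal polynomial $S^n\chi_{M^*}(1/S,T)$ to produce unweighted partial sums is therefore aimed at a false target.

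Regarding the comparison with the paper: its proof of Proposition \ref{chi-i-dual} is a one-line pointer --- rewrite Theorem \ref{chi-dual} ``in the same way as rewriting the MacWilliams relations'' \cite[\S5.2]{macwilliams:1977} --- and that rewriting is exactly your binomial expansion; the classical binomial-moment identities carry precisely the weights that reappear above after the index change $\chi_i(T)=A_{n-i}(T)$. Moreover, the weighted version is what the paper itself uses downstream: in the proof of Theorem \ref{bound}, the rows of the matrix on the $\chi_i^*$ side are displayed with binomial-coefficient entries ${i\choose j}$, which under the literal statement of the Proposition would all equal $1$. So your method coincides with the paper's intended one, and your computation is sound; your only misstep was to trust the printed right-hand side and budget the bulk of your effort for a ``final reconciliation'' that cannot succeed. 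Correct the statement to the weighted form and your argument, as already written, is a complete proof.
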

\begin{proof}
This is obtained by rewriting the formula in Theorem \ref{chi-dual}. This can be done in the same way as rewriting the MacWilliams relations from coding theory, see for example \cite[\S5.2]{macwilliams:1977}.
\end{proof}

In some cases, the relations from Theorem \ref{mu-chi} and Propositions \ref{chi-i-dual} are enough to completely determine the coboundary polynomial $\chi_M(S,T)$ from the polynomials $\mu_M(S,T)$ and $\mu_{M^*}(S,T)$.

\begin{theorem}\label{bound}
Let $M$ be a matroid with $2(d+d^*)\geq n+3$. Then the M\"obius polynomials $\mu_M(S,T)$ and $\mu_{M^*}(S,T)$ determine $\chi_M(S,T)$.
\end{theorem}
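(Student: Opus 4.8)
The plan is to set up the linear system relating the unknown coefficients and to prove it has a unique solution. By Theorem~\ref{mu-chi} applied to $M$ and (after swapping the roles of $d$ and $d^*$) to $M^*$, the only coefficients not already pinned down by $\mu_M$ and $\mu_{M^*}$ are $\chi_i(T)$ for $d^*-1\le i\le n-d$ and $\chi_i^*(T)$ for $d-1\le i\le n-d^*$, which is $U:=n-d-d^*+2$ unknowns on each side. Proposition~\ref{chi-i-dual} supplies $n+1$ linear relations coupling the $\chi_i$ and $\chi_i^*$; moving the known terms to the right-hand side turns these into an inhomogeneous system in the $2U$ unknowns. Since an actual matroid realises a solution, it suffices to prove uniqueness, i.e. that the associated homogeneous system has only the trivial solution. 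Note that $2(d+d^*)\ge n+3$ is precisely the inequality $n+1\ge 2U$, so the hypothesis guarantees at least as many equations as unknowns; the substance of the proof is to show the system is nondegenerate.

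To analyse the homogeneous system I would phrase it in terms of a ``phantom'' pair of polynomials. A homogeneous solution corresponds to $f(S)=\sum_i a_iS^i$ and $g(S)=\sum_i b_iS^i$ related by the transform of Theorem~\ref{chi-dual},
\[ g(S)=(S-1)^nT^{-k}\,f\!\left(\tfrac{S+T-1}{S-1}\right), \]
with $a_i=0$ outside $[d^*-1,n-d]$ and $b_i=0$ outside $[d-1,n-d^*]$; I want to deduce $f=0$. The key simplification is to expand in powers of $S-1$: if $f=\sum_j c_j(S-1)^j$ then a short computation gives $g=\sum_j c_jT^{j-k}(S-1)^{n-j}$, so the transform simply reverses the $(S-1)$-degree and rescales by powers of $T$.

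Next I would translate the four support constraints into this language. The bound $\deg f\le n-d$ says $c_j=0$ for $j>n-d$; the bound $\deg g\le n-d^*$ says $c_j=0$ for $j<d^*$, i.e. $(S-1)^{d^*}\mid f$; and $a_i=0$ for $i<d^*-1$ says $S^{d^*-1}\mid f$. Hence $f=S^{d^*-1}(S-1)^{d^*}h(S)$ with $\deg h\le D:=n-d-2d^*+1$. Substituting this into the transform and clearing denominators, the single remaining constraint $S^{d-1}\mid g$ becomes $S^{d-1}\mid H$, where $H(S)=\sum_{\ell=0}^{D}h_\ell(S+T-1)^\ell(S-1)^{D-\ell}$ (the cofactor of $H$ in $g$ has nonzero constant term in $S$, so divisibility passes to $H$); this is $d-1$ homogeneous linear conditions on the $D+1$ coefficients of $h$.

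Finally, the hypothesis gives $D+1\le d-1$, so there are enough conditions, and I would prove they are independent. Writing $w=\tfrac{S+T-1}{S-1}$, the relevant polynomials are $\phi_\ell=(S-1)^Dw^\ell$, which are linearly independent because $w$ is non-constant; since each has degree $D\le d-2$ they remain independent modulo $S^{d-1}$. Thus the $d-1$ conditions have full column rank $D+1$, forcing $h=0$ and hence $f=0$, which proves uniqueness and the theorem. I expect the main obstacle to be exactly this nondegeneracy step: the equation count alone is not sufficient, and one must exhibit the independence explicitly — the substitution $w=(S+T-1)/(S-1)$ that already underlies Theorem~\ref{chi-dual} is what makes it transparent.
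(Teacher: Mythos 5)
Your proof is correct, and it takes a genuinely different route from the paper's. The paper works directly with the matrix of the system in Proposition~\ref{chi-i-dual}: it discards the columns corresponding to coefficients already known from Theorem~\ref{mu-chi}, glues the two remaining blocks, and establishes full rank by explicit row operations on binomial coefficients combined with the observation that the rows of the left block carry distinct powers of $T$ while the right block is integral (plus a case distinction on whether $d\ge n-d-d^*+2$, swapping $M$ and $M^*$ if needed). You instead analyse the homogeneous system intrinsically: a homogeneous solution is encoded as a pair $(f,g)$ related by the duality transform of Theorem~\ref{chi-dual}, the transform acts diagonally in the $(S-1)$-basis via $(S-1)^j\mapsto T^{j-k}(S-1)^{n-j}$ (this computation is correct, since $\frac{S+T-1}{S-1}-1=\frac{T}{S-1}$), and the four support constraints become divisibility and degree conditions, so that the hypothesis $2(d+d^*)\ge n+3$ enters exactly as $\deg_S H\le n-d-2d^*+1\le d-2<d-1$, whence $S^{d-1}\mid H$ forces $H=0$ and then $h=0$ by the linear independence of the $(S+T-1)^\ell(S-1)^{D-\ell}$. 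This buys a shorter, more transparent argument: the bound appears as a single degree inequality, the case analysis disappears, and the informal ``graphical'' rank argument of the paper is replaced by a clean algebraic one; it is also closer in spirit to the paper's second, zeta-polynomial proof, which however only counts equations against unknowns and never verifies independence, whereas yours does. One point you should state explicitly: identifying homogeneous solutions of Proposition~\ref{chi-i-dual} with transform-related pairs uses that the system is \emph{equivalent} to, not merely implied by, the functional equation of Theorem~\ref{chi-dual}; this is immediate because the right-hand sides $\sum_{i=n-v}^n\chi_i^*(T)$ are partial sums from which the individual $\chi_i^*(T)$ are recovered by differencing (the paper relies on the same fact when asserting its matrices have full rank), but it deserves a sentence in a careful write-up.
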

\begin{proof}
We try to determine the coboundary polynomials of $M$ and $M^*$ simultaneously. First we use Theorem \ref{mu-chi} for $M$ and $M^*$. This gives us the value of $\chi_i(T)$ for $i<d^*-1$ and $i>n-d$, and the value of $\chi_i^*(T)$ for $i<d-1$ and $i>n-d^*$. So we are left with the unknowns
\[ \chi_{d^*-1}(T),\chi_{d^*}(T),\ldots,\chi_{n-d}(T),\chi_{d-1}^*(T),\chi_d^*(T),\ldots,\chi_{n-d^*}^*(T). \]
This are $2(n-d-d^*+2)$ variables. Proposition \ref{chi-i-dual} gives us $n+1$ equations. In order for this system to be solvable, we need at least as may equations as unknowns. This means
\begin{eqnarray*}
n+1 & \geq & 2(n-d-d^*+2) \\
n+1 & \geq & 2n+4-2(d+d^*) \\
2(d+d^*) & \geq & n+3.
\end{eqnarray*}
We now need to show that, given $2(d+d^*)\geq n+3$, we have enough independent equations. Since all the  coefficients of the equations are known, it is possible to do this directly, but that gives lengthy calculations. We will give a more graphical approach. First, we visualize how Proposition \ref{chi-i-dual} looks like in matrix form. The grey areas are filled with nonzero entries, the white areas contain only zeros.
\[ \begin{array}{ccccc}
\begin{tikzpicture}[scale=.4]
\fill[gray] (0,8) -- (8,0) -- (8,8);
\draw (0,0) rectangle (8,8);
\end{tikzpicture} &
\begin{tikzpicture}[scale=.4]
\draw (0,0) rectangle (1,8);
\end{tikzpicture} & 
\begin{picture}(1,8)
\put(.2,5){$=$}
\end{picture} &
\begin{tikzpicture}[scale=.4]
\fill[gray] (0,0) -- (8,0) -- (8,8);
\draw (0,0) rectangle (8,8);
\end{tikzpicture} &
\begin{tikzpicture}[scale=.4]
\draw (0,0) rectangle (1,8);
\end{tikzpicture} \\
 & \chi_i & & & \chi_i^*
\end{array} \]
From the triangular shape of the matrices, it is clear that they both have full rank -- something we could have also concluded from the fact that the relation in Theorem \ref{chi-dual} is a two way equivalence. We order the system now in a way that all unknowns are on the left hand side. This means for the first matrix we ``cut off'' $d^*-1$ columns at the right of the matrix, and $d-1$ at the left, since they correspond to values of $i$ for which $\chi_i(T)$ is known. For the second matrix, it is the other way around. Since we assumed $2(d+d^*)\geq n+3$, we are cutting off at least half of the rows. The new system looks like this:
\[ \begin{array}{ccccccc}
\begin{tikzpicture}[scale=.4]
\fill[gray] (0,8) -- (3,8) -- (3,2) -- (0,5);
\draw (0,0) rectangle (3,8);
\end{tikzpicture} & 
\begin{tikzpicture}[scale=.4]
\draw[white] (0,0) -- (0,5);
\draw (0,5) rectangle (1,8);
\end{tikzpicture} & 
\begin{picture}(1,8)
\put(.2,5){$-$}
\end{picture} &
\begin{tikzpicture}[scale=.4]
\fill[gray] (0,0) -- (3,0) -- (3,5) -- (0,2);
\draw (0,0) rectangle (3,8);
\end{tikzpicture} & 
\begin{tikzpicture}[scale=.4]
\draw[white] (0,0) -- (0,5);
\draw (0,5) rectangle (1,8);
\end{tikzpicture} & 
\begin{picture}(1,8)
\put(.2,5){$=$}
\end{picture} &
\begin{tikzpicture}[scale=.4]
\draw (0,0) rectangle (1,8);
\end{tikzpicture} \\
 & \chi_i & & & \chi_i^* & &
\end{array} \]
The vector on the right hand side is known, and depends on $d$, $d^*$ and the two M\"obius polynomials. The matrices both have full rank $n-d-d^*+2$, as is clear from their shape. We can write this as one system by ``glueing together'' the matrices on the left hand side.
\[ \begin{array}{cccc}
\begin{tikzpicture}[scale=.4]
\fill[gray] (0,8) -- (3,8) -- (3,2) -- (0,5);
\fill[gray] (3,0) -- (6,0) -- (6,5) -- (3,2);
\draw (0,0) rectangle (6,8);
\draw (3,0) -- (3,8);
\end{tikzpicture} &
\begin{tikzpicture}[scale=.4]
\draw[white] (0,0) -- (0,2);
\draw (0,2) rectangle (1,8);
\draw (0,5) -- (1,5);
\end{tikzpicture} &
\begin{picture}(1,8)
\put(.2,5){$=$}
\end{picture} &
\begin{tikzpicture}[scale=.4]
\draw (0,0) rectangle (1,8);
\end{tikzpicture}
\end{array} \]
We need to show that this matrix has full rank. Have a look at the bottom $d$ rows of this matrix. The complete left side is zero, so we ignore that for a moment. The right side has all entries nonzero, and from Proposition \ref{chi-i-dual} we know the entries are binomial coefficients:
\[ \left( \begin{array}{cccc}
{d-1\choose d-1} & {d\choose d-1} & \cdots & {n-d^*\choose d-1} \\
\vdots & \vdots & & \vdots \\
{d-1\choose 1} & {d\choose 1} & \cdots & {n-d^*\choose 1} \\
{d-1\choose 0} & {d\choose 0} & \cdots & {n-d^*\choose 0}
\end{array} \right). \]
By the inductive relations between binomial coefficients, we can preform row operations on this matrix to obtain
\[ \left( \begin{array}{cccc}
0 & 0 & \cdots & {n-d-d^*+1\choose d-1} \\
\vdots & \vdots & & \vdots \\
0 & {1\choose 1} & \cdots & {n-d-d^*+1\choose 1} \\
{0\choose 0} & {1\choose 0} & \cdots & {n-d-d^*+1\choose 0}
\end{array} \right). \]
Flipping the matrix upside down, we have obtained the following picture:
\[ \begin{array}{cccc}
\begin{tikzpicture}[scale=.4]
\fill[gray] (0,8) -- (3,8) -- (3,2) -- (0,5);
\fill[gray] (5,0) -- (6,0) -- (6,5) -- (3,2);
\draw (0,0) rectangle (6,8);
\draw (3,0) -- (3,8);
\end{tikzpicture} &
\begin{tikzpicture}[scale=.4]
\draw[white] (0,0) -- (0,2);
\draw (0,2) rectangle (1,8);
\draw (0,5) -- (1,5);
\end{tikzpicture} &
\begin{picture}(1,8)
\put(.2,5){$=$}
\end{picture} &
\begin{tikzpicture}[scale=.4]
\draw (0,0) rectangle (1,8);
\end{tikzpicture}
\end{array} \]
In this picture, we show the case for $d<n-d-d^*+2$. If we had $d\geq n-d-d^*+2$, we would have obtained a matrix that was of full rank and we were done. If $d^*\geq n-d-d^*+2$, we can change $M$ and $M^*$ and we are also done. So from now on, assume $d,d^*<n-d-d^*+2$. \\
Call the left and the right half of the matrix $L$ and $R$. Suppose a linear combination of the columns of the matrix is zero. Since all columns inside $L$ and inside $R$ are independent, this means we can make a linear combination $\mathbf{l}$ of columns of $L$ and a linear combination $\mathbf{r}$ of columns of $R$ that are both nonzero and a nonzero multiple of each other. \\
By the shape of $L$ and $R$, the first $d^*$ and the last $d$ entries of $\mathbf{l}$ and $\mathbf{r}$ have to be zero. We will show that the remaining $n-d-d^*+1$ entries of $\mathbf{l}$ and $\mathbf{r}$ cannot be multiples of each other. \\
Crucial in the proof is that all rows of $L$ are multiplied with a different power of $T$, whereas $R$ is completely filled with integers. Therefore, any linear combination of columns of $R$ will have the same powers of $T$ involved in every nonzero entry, even if we take the coefficients of the linear combination to be polynomials in $T$ and $T^{-1}$. On the other hand, the entries of $\mathbf{l}$ will all have different powers of $T$ involved. The only possibility to cancel this out, is if we can have only one nonzero entry in $\mathbf{l}$ and $\mathbf{r}$, at the same place. \\
We focus now on the matrix $L$. It has maximal (column) rank $n-d-d^*+2$. From Proposition \ref{chi-i-dual} we know the entries are binomial coefficients, with every row multiplied with another (possibly negative) power of $T$. The first $d^*$ rows form a matrix with rank $d^*$, from the same reasoning we used for the last $d$ rows of $R$. So if we make a linear combination of the columns of $L$ where the first $d^*$ entries are zero, there are $n-d-d^*+2-d^*=n-d-2d^*+2$ free variables involved. Notice we assumed $d^*<n-d-d^*+2$, so this number is positive. We can use those free variables to make more entries of $\mathbf{l}$ zero: add one of the middle $n-d-d^*+1$ rows of $L$ as an extra constraint, and choose one of the free variables in a way that the corresponding entry in $\mathbf{l}$ becomes zero. We are left with
\[ n-d-d^*+1-(n-d-2d^*+2)=d^*-1\geq2 \]
entries of $\mathbf{l}$ that are not zero. They also cannot be zero ``by accident'' since the middle $n-d-d^*+1$ rows of $L$ form a matrix of full rank. So $\mathbf{l}$ cannot have only one nonzero entry, as was to be shown. \\
To summarize, we have shown that we can use Theorem \ref{mu-chi} and some of the equations in Proposition \ref{chi-i-dual} to find $\chi_M(S,T)$ from $\mu_M(S,T)$ and $\mu_{M^*}(S,T)$ if $2(d+d^*)\geq n+3$.
\end{proof}

\section{Alternative approach: zeta polynomials}\label{zeta}

The two-variable zeta polynomial is extensively studied by Duursma \cite{duursma:2004}, who defined and studied the one-variable case in \cite{duursma:1999,duursma:2001a}. We start with the definition from coding theory, to motivate the case of the coboundary polynomial. The definitions only hold for codes with minimum distance and dual minimum distance at least 3; so the corresponding matroids are simple. \\
For the reader not familiar with coding theory, it is possible to directly take Theorem \ref{uniform-basis} as a definition for the two-variable zeta polynomial.
\begin{definition}\label{def-zeta}
Let $C$ be a linear $[n,k,d]$ code over $\mathbb{F}_q$ with extended weight enumerator $W_C(X,Y,T)$. The \emph{two-variable zeta polynomial} $P_C(Q,T)$ of this code is the unique polynomial of degree at most $n-d$ in $S$ such that the generating function
\[ \frac{P_C(Q,T)}{(1-Q)(1-TQ)}(Y(1-Q)+XQ)^n \]
has expansion
\[ \ldots+\frac{W_C(X,Y,T)-X^n}{T-1}Q^{n-d}+\ldots. \]
The quotient $Z_C(Q,T)=P_C(Q,T)/((1-Q)(1-TQ))$ is called the \emph{two-variable zeta function}.
\end{definition}
The two-variable zeta polynomial and the extended weight enumerator determine each other, see \cite{duursma:2001a,duursma:2004}. The original definition gives a polynomial $P(T,u)$ in the variables $T$ and $u$ instead of $Q$ and $T$, respectively. We change this to keep consistency with existing literature on the extended weight enumerator. Just as with the other polynomials, we often refer to the zeta polynomial in the following form:
\[ P_C(Q,T)=\sum_{i=0}^rP_i(T)\,Q^i. \]

The extended weight enumerator of an MDS code is completely determined by its parameters (see for example \cite{macwilliams:1977},\cite{jurrius:2011a}). So even if there does not exist an MDS code with parameters $[n,k,d]$, we can formally define its extended weight enumerator $M_{n,d}$. The coefficient of $X^{n-w}Y^w$ is
\[ {n\choose w}(T-1)\sum_{t=0}^{w-d}(-1)^t{w-1\choose t}T^{w-d-t}. \]
\begin{proposition}\label{zeta-MDS}
A code is MDS if and only if $P_C(Q,T)=1$.
\end{proposition}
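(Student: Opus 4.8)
The plan is to unwind Definition \ref{def-zeta} and turn the statement ``$P_C(Q,T)=1$'' into a condition on the extended weight enumerator, which in turn is equivalent to the MDS property. Recall that a code is MDS precisely when $d=n-k+1$, equivalently when its extended weight enumerator coincides with the formal MDS enumerator $M_{n,d}$ whose coefficients are written out just before the statement. So I would prove the two implications by relating $P_C(Q,T)=1$ to the explicit form of $W_C(X,Y,T)$.

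First I would substitute $P_C(Q,T)=1$ into the generating function in Definition \ref{def-zeta}, so that the zeta function becomes
\[
Z_C(Q,T)=\frac{(Y(1-Q)+XQ)^n}{(1-Q)(1-TQ)}.
\]
The next step is to extract the coefficient of $Q^{n-w}$ (for each $w$) in this expression and match it, via the defining relation, against $(W_C(X,Y,T)-X^n)/(T-1)$. Concretely, I would expand $(Y(1-Q)+XQ)^n=\sum_j \binom{n}{j}(XQ)^j(Y(1-Q))^{n-j}$ and use the geometric-series expansions $1/(1-Q)=\sum Q^a$ and $1/(1-TQ)=\sum T^b Q^b$ to compute the coefficient of each power $Q^{n-d}, Q^{n-d+1},\ldots$ I expect this computation to reproduce exactly the closed form
\[
\binom{n}{w}(T-1)\sum_{t=0}^{w-d}(-1)^t\binom{w-1}{t}T^{w-d-t}
\]
for the coefficient $A_w(T)$, i.e. to recover $W_C=M_{n,d}$, which is the MDS enumerator. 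This establishes that $P_C(Q,T)=1$ forces $C$ to be MDS.

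For the converse, I would run the same coefficient comparison in reverse: assuming $C$ is MDS, its enumerator equals $M_{n,d}$, so I plug that explicit $A_w(T)$ into the right-hand expansion of Definition \ref{def-zeta} and verify that the unique $P_C(Q,T)$ of degree at most $n-d$ producing it is the constant polynomial $1$. Uniqueness is guaranteed by the definition, so it suffices to exhibit $P_C=1$ as \emph{a} solution, which is just the forward computation read backwards. An alternative and cleaner route for this direction is to note that MDS means $d=n-k+1$, hence $\deg_Q P_C\le n-d=k-1$; combined with the known normalization of the zeta polynomial (the sum of its coefficients, or a boundary evaluation, being fixed) one pins down all coefficients at once. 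I would choose whichever bookkeeping is shorter once the generating-function identity is in hand.

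The main obstacle is the generating-function algebra: correctly aligning the ``$\ldots+\,\cdot\,Q^{n-d}+\ldots$'' indexing convention with the binomial expansion of $(Y(1-Q)+XQ)^n$, and handling the $1/((1-Q)(1-TQ))$ factor so that the $(T-1)$ in the denominator of the target cancels cleanly against the $(T-1)$ appearing in the MDS coefficients. The interplay of the two summation variables from the two geometric series, together with the inner binomial sum, is exactly where a sign or index error could hide, so I would verify the identity first on the smallest nontrivial weight $w=d$ (where the inner sum has a single term) before asserting the general pattern.
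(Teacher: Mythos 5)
Your proposal is correct and takes essentially the same route as the paper: substitute $P_C(Q,T)=1$ into the generating function of Definition \ref{def-zeta}, extract the coefficient of $Q^{n-d}$, verify it equals $\left(M_{n,d}-X^n\right)/(T-1)$, and let the uniqueness clause in the definition supply the converse direction. One small indexing point: the defining relation constrains only the coefficient of $Q^{n-d}$ (to which each monomial $X^{n-w}Y^w$ contributes via $\binom{n}{w}Q^{n-w}(1-Q)^{w-1}/(1-TQ)$), not separate coefficients of $Q^{n-w}$ for each $w$, so the matching of the MDS coefficients happens inside that single $Q^{n-d}$ coefficient.
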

\begin{proof}
Since we know how the extended weight enumerator of an MDS code looks like, we can proof this Proposition by writing out the generating function from Definition \ref{def-zeta} with $P_C(Q,T)=1$ and see that the coefficient of $Q^{n-d}$ indeed gives the extended weight enumerator of an MDS code.
\end{proof}
\begin{theorem}\label{MDS-basis}
The zeta polynomial gives us a way to write the extended weight enumerator on a basis of MDS weight enumerators:
\[ W_C(X,Y,T)=P_0(T)\,M_{n,d}+P_1(T)\,M_{n,d+1}+\ldots+P_r(T)\,M_{n,d+r}. \]
\end{theorem}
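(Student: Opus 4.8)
The plan is to unwind the generating function in Definition~\ref{def-zeta} and extract the coefficient of $Q^{n-d}$, using Proposition~\ref{zeta-MDS} to recognize the pieces as formal MDS weight enumerators. First I would expand $P_C(Q,T)=\sum_{i=0}^{r}P_i(T)\,Q^i$ and factor the defining generating function as
\[ \frac{P_C(Q,T)}{(1-Q)(1-TQ)}\bigl(Y(1-Q)+XQ\bigr)^n = P_C(Q,T)\cdot K(Q,T), \qquad K(Q,T):=\frac{\bigl(Y(1-Q)+XQ\bigr)^n}{(1-Q)(1-TQ)}. \]
The kernel $K(Q,T)$ is precisely the generating function attached to the constant zeta polynomial $1$, that is, to an MDS code. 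By Proposition~\ref{zeta-MDS} together with Definition~\ref{def-zeta}, applied once for each minimum distance, the coefficient of $Q^{j}$ in $K(Q,T)$ equals $\dfrac{M_{n,n-j}-X^n}{T-1}$.

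Next I would read off the coefficient of $Q^{n-d}$ in $P_C(Q,T)\,K(Q,T)$. Since multiplication by $P_i(T)\,Q^i$ only shifts the index, this coefficient is
\[ \sum_{i=0}^{r}P_i(T)\,\bigl[\text{coeff. of }Q^{\,n-d-i}\text{ in }K(Q,T)\bigr] = \sum_{i=0}^{r}P_i(T)\,\frac{M_{n,d+i}-X^n}{T-1}. \]
Definition~\ref{def-zeta} prescribes this same coefficient to be $\dfrac{W_C-X^n}{T-1}$, so clearing the common factor $T-1$ gives
\[ W_C(X,Y,T)-X^n=\sum_{i=0}^{r}P_i(T)\bigl(M_{n,d+i}-X^n\bigr). \]
This already proves the claimed formula in every weight $w\geq d$, because $M_{n,d+i}$ contributes only to weights at least $d+i$; the coefficients $P_i(T)$ are thereby uniquely determined by the triangular (echelon) structure of the $M_{n,d+i}$ in these weights.

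The step I expect to be the main obstacle is the bookkeeping of the constant term $X^n$. Rearranging the last display gives
\[ W_C(X,Y,T)=\sum_{i=0}^{r}P_i(T)\,M_{n,d+i}+\Bigl(1-\sum_{i=0}^{r}P_i(T)\Bigr)X^n, \]
so the theorem as stated is equivalent to the normalization $\sum_{i=0}^{r}P_i(T)=P_C(1,T)=1$. This is exactly what the $-X^n$ in Definition~\ref{def-zeta} is designed to guarantee: it is the residue of the zeta function at $Q=1$ and reflects that $W_C$ and every $M_{n,d+i}$ share the constant coefficient $A_0=1$. I would establish it as a short separate lemma (or cite it as the standard normalization of Duursma's zeta polynomial), after which the spurious $X^n$ term disappears and the MDS-basis expansion follows.
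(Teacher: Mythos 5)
Your unwinding of Definition~\ref{def-zeta} is correct and is essentially the paper's one-line proof (``follows directly from Definition~\ref{def-zeta} and Proposition~\ref{zeta-MDS}'') made explicit: the coefficient of $Q^{j}$ in the kernel $K(Q,T)$ is $\bigl(M_{n,n-j}-X^n\bigr)/(T-1)$, so comparing coefficients of $Q^{n-d}$ gives
\[ W_C(X,Y,T)-X^n=\sum_{i=0}^{r}P_i(T)\,\bigl(M_{n,d+i}-X^n\bigr), \]
and you correctly isolate the crux: the theorem is equivalent to the normalization $P_C(1,T)=\sum_i P_i(T)=1$.

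The gap is that this normalization is exactly the point where a proof is needed, and your justification for it is circular and in one respect false. That $W_C$ and all $M_{n,d+i}$ share the coefficient $A_0=1$ only shows that \emph{if} $W_C=\sum_i P_i M_{n,d+i}$ \emph{then} $\sum_i P_i=1$; it cannot be run in the reverse direction. More importantly, $P_C(1,T)=1$ is \emph{not} ``designed into'' Definition~\ref{def-zeta}: the defining condition pins down a unique $P$ for \emph{any} formal enumerator of the shape $X^n+\sum_{w\ge d}c_w(T)X^{n-w}Y^w$, and for most of these the sum is not $1$. Concretely, since $M_{n,n}-X^n=(T-1)Y^n$, replacing a genuine $W_C$ by $W_C+(T-1)Y^n$ changes only $P_{n-d}(T)$ into $P_{n-d}(T)+1$; the perturbed enumerator satisfies every hypothesis visible to the definition, yet now $\sum_i P_i=2$ and the claimed basis expansion fails. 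So no argument using only Definition~\ref{def-zeta} and Proposition~\ref{zeta-MDS} --- which is all that your proposal (and, to be fair, the paper's own one-liner) invokes --- can prove Theorem~\ref{MDS-basis}; one must use that $W_C$ comes from an actual code. A clean way to do this is via binomial moments: writing the defining identity in the basis $(X-Y)^mY^{n-m}$ shows $[Q^{\,n-d-m}]Z_C(Q,T)=\bigl(B_m(T)/\binom{n}{m}-1\bigr)/(T-1)$, where $B_m(T)=\sum_w\binom{n-w}{m}A_w(T)$; expanding $P_C=Z_C\cdot(1-Q)(1-TQ)$ and telescoping gives $P_C(1,T)=z_{n-d}-T\,z_{n-d-1}$ with $z_j=[Q^j]Z_C$, and then the code-theoretic facts $B_0=T^k$ and $B_1=nT^{k-1}$ (the latter needs $d^*\ge 2$, i.e.\ no identically zero coordinate --- this is where the paper's simplicity hypothesis enters) yield
\[ P_C(1,T)=\frac{T^k-1}{T-1}-T\cdot\frac{T^{k-1}-1}{T-1}=1. \]
So your instinct to separate this out as a lemma was right, but the lemma is the actual content of the theorem and must be proved (or cited precisely from Duursma, where $P(1)=1$ is established); as written, the proposal stops short of it.
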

\begin{proof}
This follows directly from Definition \ref{def-zeta} and Proposition \ref{zeta-MDS}.
\end{proof}
Duursma \cite{duursma:2004} extended the definition of the zeta polynomial to matroids. Similar to that approach, we can use that we already extended the definition of the extended weight enumerator to matroids.
\begin{theorem}
Let $M$ be a matroid with coboundary polynomial $\chi_M(S,T)$. The \emph{two-variable zeta polynomial} $P_M(Q,T)$ of this matroid is the unique polynomial of degree at most $n-d$ in $Q$ such that the generating function
\[ \frac{P_M(Q,T)}{(1-Q)(1-TQ)}(1+(S-1)T)^n \]
has expansion
\[ \ldots+\frac{\chi_M(S,T)-S^n}{T-1}Q^{n-d}+\ldots. \]
\end{theorem}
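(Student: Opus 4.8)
The plan is to transfer the code-theoretic Definition \ref{def-zeta} to matroids through the identity $\chi_M(S,T)=S^nW_M(1,S^{-1},T)$ together with the fact, recalled in Section \ref{ewe}, that the extended weight enumerator $W_M(X,Y,T)$ is defined for \emph{every} matroid. Since $W_M$ is homogeneous of degree $n$ in $X,Y$, the dehomogenised data $W_M(1,Y/X,T)$ recovers $W_M(X,Y,T)=X^nW_M(1,Y/X,T)$, so the passage $W_M\leftrightarrow\chi_M$ given by the identity above is a bijection. Hence existence and uniqueness of $P_M$ should follow from the corresponding statement for $P_C$ once the two generating functions are matched.

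First I would perform the substitution $X=1$, $Y=S^{-1}$ in Definition \ref{def-zeta}. A short computation gives
\[ (Y(1-Q)+XQ)^n=\bigl(S^{-1}(1-Q)+Q\bigr)^n=S^{-n}\bigl(1+(S-1)Q\bigr)^n, \]
and from the identity above $W_M(1,S^{-1},T)-1=S^{-n}\bigl(\chi_M(S,T)-S^n\bigr)$. Multiplying the generating function of Definition \ref{def-zeta} by $S^n$ then turns it into
\[ \frac{P_M(Q,T)}{(1-Q)(1-TQ)}\bigl(1+(S-1)Q\bigr)^n, \]
whose coefficient of $Q^{n-d}$ becomes $\bigl(\chi_M(S,T)-S^n\bigr)/(T-1)$, which is exactly the stated expansion (with $P_M:=P_C$). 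This both fixes the shape of the generating function and shows that the matroid zeta polynomial agrees with the code one whenever $M$ is representable; for a general matroid the same manipulation is read as a purely formal identity between the matroid polynomials.

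It then remains to justify that matching the single coefficient of $Q^{n-d}$ determines $P_M$ uniquely among polynomials of degree at most $n-d$ in $Q$. Here I would invoke Theorem \ref{mu-chi}: it gives $\chi_i(T)=0$ for $n-d<i<n$ and $\chi_n(T)=1$, so $\chi_M(S,T)-S^n=\sum_{i=0}^{n-d}\chi_i(T)S^i$ has degree at most $n-d$ in $S$, and hence so does the right-hand side. On the other hand, writing $P_M(Q,T)=\sum_{i=0}^{n-d}P_i(T)Q^i$, the coefficient of $Q^{n-d}$ in the generating function is $\sum_{i=0}^{n-d}P_i(T)\,g_{n-d-i}(S,T)$, where $g_\ell(S,T)$ denotes the coefficient of $Q^\ell$ in $(1+(S-1)Q)^n/((1-Q)(1-TQ))$. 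Inspecting leading terms, $g_\ell$ has degree exactly $\ell$ in $S$ with leading coefficient $\binom{n}{\ell}$, so the linear system relating the $n-d+1$ coefficients in $S$ to $P_0(T),\ldots,P_{n-d}(T)$ is triangular with invertible diagonal. This yields the $P_i(T)$ uniquely and solvably.

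The main obstacle I anticipate is precisely this last point: the statement fixes only one $Q$-coefficient yet asserts a unique $P_M$ carrying $n-d+1$ degrees of freedom, so the real content is the triangularity (equivalently, invertibility) of the map $P_M\mapsto[\,Q^{n-d}\,]$. Making this rigorous requires the matching degree bounds on both sides, which is exactly why Theorem \ref{mu-chi} is needed to guarantee that the $S$-degree of $\chi_M(S,T)-S^n$ is at most $n-d$. Once these counts are aligned, existence and uniqueness follow either from the triangular system directly or, more economically, by transporting the known bijection between $P_C$ and $W_C$ through the substitution above.
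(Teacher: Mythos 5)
Your proposal is correct and takes essentially the same route as the paper: substitute $X=1$, $Y=S^{-1}$ into Definition \ref{def-zeta}, multiply through by $S^n$, and invoke $\chi_M(S,T)=S^nW_M(1,S^{-1},T)$, which is exactly the paper's three-line computation (and you rightly read the kernel as $(1+(S-1)Q)^n$; the $T$ in the theorem statement is a typo). Your extra triangularity argument for existence and uniqueness of the $P_i(T)$ is sound but goes beyond what the paper records, since the paper simply inherits uniqueness from the code-theoretic definition transported through this substitution.
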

\begin{proof}
Apply $X=1$ and $Y=S^{-1}$ in the definition of the zeta function and multiply the whole equation with $S^n$.
\begin{eqnarray*}
Z(Q,T)\cdot(Y(1-Q)+XQ)^n & = & \ldots+\frac{W_C(X,Y,T)-X^n}{T-1}Q^{n-d}+\ldots \\
Z(Q,T)\cdot S^n\cdot(S^{-1}(1-Q)+Q)^n & = & \ldots+\frac{W_C(1,S^{-1},T)-1}{T-1}S^nQ^{n-d} \\
Z(Q,T)\cdot(1+(S-1)Q)^n & = & \ldots+\frac{\chi_M(S,T)-S^n}{T-1}Q^{n-d}+\ldots
\end{eqnarray*}
\end{proof}
Proposition \ref{zeta-MDS} and Theorem \ref{MDS-basis} have a direct analogue for matroids. Let $X_{n,d}$ be the coboundary polynomial of the uniform matroid on $n$ elements with rank $n-d+1$.
\begin{proposition}\label{zeta-uniform}
A matroid is uniform if and only if $P_M(Q,T)=1$.
\end{proposition}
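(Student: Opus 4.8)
The plan is to prove this in direct parallel with Proposition \ref{zeta-MDS}, replacing the extended weight enumerator by the coboundary polynomial and MDS codes by uniform matroids. The central observation is that the zeta polynomial and the coboundary polynomial determine each other, so it suffices to translate the condition $P_M(Q,T)=1$ into a condition on $\chi_M(S,T)$ and then to recognize the latter as a defining property of uniform matroids.

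First I would set up the matroid analogue of Theorem \ref{MDS-basis}. Writing $X_{n,d+i}$ for the coboundary polynomial of the uniform matroid of rank $n-d-i+1$ on $n$ elements, the same generating-function manipulation used to define $P_M(Q,T)$ expands
\[ \chi_M(S,T)=P_0(T)\,X_{n,d}+P_1(T)\,X_{n,d+1}+\cdots+P_r(T)\,X_{n,d+r}, \]
where the $X_{n,d+i}$ are linearly independent (they are triangular in the powers of $S$, exactly as the $M_{n,d+i}$ are triangular in the weights). Hence $P_M(Q,T)=1$, i.e. $P_0=1$ and $P_1=\cdots=P_r=0$, holds if and only if $\chi_M(S,T)=X_{n,d}$. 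Since $X_{n,d}$ is by definition the coboundary polynomial of the uniform matroid $U$ of rank $k=n-d+1$, the ``only if'' direction is then immediate: if $M$ is uniform then $\chi_M=X_{n,d}$ and the expansion forces $P_M=1$. This is essentially Proposition \ref{zeta-MDS} transported through the substitution $X=1$, $Y=S^{-1}$ and $\chi_M=S^nW_M(1,S^{-1},T)$.

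The substance is the converse: I must show that $\chi_M(S,T)=X_{n,d}$ forces $M$ to be uniform, and here one cannot appeal to a generic ``determines each other'' statement, since the whole point of the paper is that distinct matroids may share these polynomials. Instead I would read genuine matroid data off $\chi_M$. Both $k=r(M)$ and $d$ are determined by $\chi_M$: the degree in $T$ gives $k$, while Theorem \ref{mu-chi} locates $d$ through the vanishing $\chi_i(T)=0$ for $n-d<i<n$ (equivalently via $\chi_i(T)=A_{n-i}(T)$ as recorded after Theorem \ref{chi-dual}). From $\chi_M=X_{n,d}$ we conclude that $M$ shares $n,k,d$ with $U$, whence $d=n-k+1$; that is, $M$ attains the Singleton bound. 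Dualising, $d$ equals the size of the smallest circuit of $M^*$, and a rank-$(n-k)$ matroid whose smallest circuit has the maximal size $(n-k)+1$ has every $(n-k)$-subset independent, hence is uniform; thus $M^*$, and so $M$, is uniform.

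The main obstacle is exactly this last structural step: converting the polynomial equality $\chi_M=X_{n,d}$ into the combinatorial statement ``$M$ is uniform''. An equivalent and perhaps cleaner route, which I would also mention, uses that the coboundary polynomial determines the Tutte polynomial for simple matroids, so $\chi_M=X_{n,d}$ yields $T(M;1,1)=T(U;1,1)=\binom{n}{k}$; as $M$ has rank $k$ on $n$ elements, having $\binom{n}{k}$ bases means every $k$-subset is a basis, i.e. $M=U_{k,n}$. Either way, the care required is to argue genuine uniqueness of the uniform matroid among all matroids with the given coboundary polynomial, rather than to assume it.
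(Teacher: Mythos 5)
Your proof is correct and follows the route the paper intends: the paper gives no separate proof of Proposition \ref{zeta-uniform} at all, stating only that Proposition \ref{zeta-MDS} and Theorem \ref{MDS-basis} ``have a direct analogue for matroids,'' and your argument is precisely that analogue, transported through the substitution $X=1$, $Y=S^{-1}$, $\chi_M(S,T)=S^nW_M(1,S^{-1},T)$, together with the triangularity (hence linear independence) of the $X_{n,d+i}$. Where you go beyond the paper is the converse direction: the paper (like its proof of Proposition \ref{zeta-MDS}) leaves implicit why the polynomial identity $\chi_M(S,T)=X_{n,d}$ forces $M$ itself to be uniform, and you rightly identify this as the substantive step and close it --- once by reading $n$, $k$, $d$ off $\chi_M$, concluding $d=n-k+1$, and dualizing so that a rank-$(n-k)$ matroid whose smallest circuit has size $n-k+1$ must be $U_{n-k,n}$; and once via the basis count $T(M;1,1)=\binom{n}{k}$, which forces every $k$-subset to be a basis. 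Either argument alone suffices, and including one of them makes the proposition genuinely proved rather than merely asserted by analogy; your caution that one cannot invoke a generic ``the polynomials determine each other'' principle here is exactly the right point, since the paper's central theme is that such principles fail in general.
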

\begin{theorem}\label{uniform-basis}
The zeta polynomial gives us a way to write the coboundary polynomial on a basis of coboundary polynomials of uniform matroids:
\[ \chi_M(S,T)=P_0(T)\,X_{n,d}+P_1(T)\,X_{n,d+1}+\ldots+P_r(T)\,X_{n,d+r}. \]
\end{theorem}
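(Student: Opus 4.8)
The plan is to transport the proof of Theorem~\ref{MDS-basis} to the coboundary setting, using the definition of the matroid zeta polynomial $P_M(Q,T)$ (the coboundary analogue of Definition~\ref{def-zeta}) together with Proposition~\ref{zeta-uniform}. Writing $P_M(Q,T)=\sum_{i=0}^{r}P_i(T)\,Q^i$, I would first factor the generating function in the definition of $P_M$ as
\[ \frac{P_M(Q,T)}{(1-Q)(1-TQ)}\bigl(1+(S-1)Q\bigr)^n=P_M(Q,T)\cdot g(Q,T),\qquad g(Q,T):=\frac{\bigl(1+(S-1)Q\bigr)^n}{(1-Q)(1-TQ)}. \]
The series $g(Q,T)$ is precisely the generating function attached by the definition to \emph{any} uniform matroid, since by Proposition~\ref{zeta-uniform} a matroid is uniform exactly when its zeta polynomial equals $1$. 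Applying the defining property to each uniform matroid in turn, the coefficient of $Q^{\,n-d'}$ in $g(Q,T)$ equals $\bigl(X_{n,d'}-S^n\bigr)/(T-1)$ for every admissible uniform minimum distance $d'$.

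Next I would read off the coefficient of $Q^{\,n-d}$ on both sides. By linearity the monomial $P_i(T)\,Q^i$ contributes $P_i(T)$ times the coefficient of $Q^{\,n-d-i}=Q^{\,n-(d+i)}$ in $g(Q,T)$, which by the previous step is $P_i(T)\bigl(X_{n,d+i}-S^n\bigr)/(T-1)$. Summing over $i$ and matching against the prescribed coefficient $\bigl(\chi_M-S^n\bigr)/(T-1)$ gives
\[ \frac{\chi_M(S,T)-S^n}{T-1}=\sum_{i=0}^{r}P_i(T)\,\frac{X_{n,d+i}(S,T)-S^n}{T-1}. \]
Clearing the factor $T-1$ and rearranging yields $\chi_M=\sum_{i}P_i(T)\,X_{n,d+i}+S^n\bigl(1-\sum_i P_i(T)\bigr)$, so the theorem is equivalent to the vanishing of the last summand, i.e.\ to the normalization $P_M(1,T)=\sum_i P_i(T)=1$.

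The main obstacle is exactly this normalization. The coefficient extraction alone leaves the spurious term $S^n\bigl(1-\sum_i P_i(T)\bigr)$, and since the coefficient of $S^n$ in $\chi_M$ and in each $X_{n,d+i}$ equals $1$ by Theorem~\ref{mu-chi}, comparing $S^n$-coefficients only returns the tautology $1=1$ and does not by itself force $\sum_i P_i(T)=1$. I would therefore record $P_M(1,T)=1$ as a separate ingredient, the coboundary analogue of the weight-$0$ normalization $A_0=1$ that silently underlies Theorem~\ref{MDS-basis}; it is the standard normalization of the zeta polynomial in \cite{duursma:2001a,duursma:2004} and is consistent with the anchor $P=1$ of Proposition~\ref{zeta-uniform}. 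A final routine check is that each minimum distance $d+i$ appearing in the sum stays in the range for which $X_{n,d+i}$ really is the coboundary polynomial of a (simple) uniform matroid; this is what pins down the effective degree $r$ of $P_M$ and makes the basis $X_{n,d},\dots,X_{n,d+r}$ in the statement the correct one.
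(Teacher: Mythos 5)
You take essentially the same route the paper does -- in fact the paper offers no separate proof of Theorem \ref{uniform-basis} at all: it is declared a ``direct analogue'' of Theorem \ref{MDS-basis}, whose entire proof is that it ``follows directly from Definition \ref{def-zeta} and Proposition \ref{zeta-MDS}'', i.e.\ exactly your coefficient extraction with Proposition \ref{zeta-uniform} identifying the coefficients of $g(Q,T)$ as uniform coboundary polynomials. The value of your write-up is that you carry that one-line argument to its actual endpoint and notice that it only proves
\[ \chi_M(S,T)-S^n=\sum_{i=0}^{r}P_i(T)\bigl(X_{n,d+i}(S,T)-S^n\bigr), \]
so that the theorem as stated is equivalent to the extra normalization $P_M(1,T)=\sum_i P_i(T)=1$. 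That observation is correct, and it is a genuine subtlety the paper silently skips: the normalization does not follow from the extraction (comparing $S^n$-coefficients gives $1=1$, as you say), and it is not merely the analogue of $A_0=1$ -- it can actually fail when $d^*=1$, i.e.\ when $M$ has a loop, so it truly uses the standing simplicity hypotheses. Citing Duursma for $P_M(1,T)=1$ is defensible and consistent with the paper's own practice of outsourcing zeta-polynomial facts to \cite{duursma:2001a,duursma:2004}, but two remarks. First, do not be tempted to deduce it from Proposition \ref{zeta-dual}: the paper proves that proposition \emph{from} Theorem \ref{uniform-basis}, so that route is circular. Second, there is a short self-contained fix inside the paper's toolkit: apply Theorem \ref{chi-dual} to your displayed identity for $M$, and compare with the same identity written for $M^*$. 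Under the transform each $X_{n,d+i}$ becomes $T^{n-d-i+1-k}X_{n,n-d-i+2}$ (Theorem \ref{chi-dual} for uniform matroids), while the spurious term $\bigl(1-\sum_i P_i(T)\bigr)S^n$ becomes $\bigl(1-\sum_i P_i(T)\bigr)T^{-k}(S+T-1)^n$, and $(S+T-1)^n=X_{n,1}$. Since the polynomials $X_{n,d'}$ for $d'=1,\dots,n+1$ (with $X_{n,n+1}=S^n$) are linearly independent, and the expansion of $\chi_{M^*}$ only involves $X_{n,d'}$ with $d'\geq d^*\geq 3$ or $d'=n+1$, the coefficient of $X_{n,1}$ must vanish, forcing $\sum_i P_i(T)=1$. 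With that supplement your argument is complete, and it is more careful than the paper's own treatment.
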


We need some more properties of the zeta polynomial. The proofs are similar to the case of the one-variable zeta polynomial as treated in \cite{duursma:2004}.
\begin{proposition}
The degree of $P_M(Q,T)$ in $S$ is $n-d-d^*+2$.
\end{proposition}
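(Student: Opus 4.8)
The plan is to pin down the exact degree of the zeta polynomial $P_M$ in $Q$ by combining the basis expansion of Theorem \ref{uniform-basis} with the duality of Theorem \ref{chi-dual}, turning the latter into a functional equation that relates the coefficients $P_i(T)$ of $P_M$ to the coefficients $P_j^*(T)$ of $P_{M^*}$. First I would apply Theorem \ref{chi-dual} to the expansion $\chi_M(S,T)=\sum_{i}P_i(T)\,X_{n,d+i}$. The key observation is that the uniform matroid underlying $X_{n,d+i}$, namely $U_{n-d-i+1,n}$, has a uniform dual, $U_{d+i-1,n}$, whose coboundary polynomial is $X_{n,n-d-i+2}$. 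Applying Theorem \ref{chi-dual} to this single uniform matroid gives $(S-1)^nX_{n,d+i}\!\left(\tfrac{S+T-1}{S-1},T\right)=T^{\,n-d-i+1}X_{n,n-d-i+2}$, so the substitution carries each basis vector $X_{n,d+i}$ to $X_{n,n-d-i+2}$ up to a power of $T$. Weighing the global prefactor $T^{-k}$ of Theorem \ref{chi-dual} against the intrinsic prefactor $T^{-(n-d-i+1)}$ of each uniform summand, and then reindexing the right-hand side in the dual basis $X_{n,d^*+j}$, yields the functional equation
\[ P_j^*(T)=T^{\,d^*+j-1-k}\,P_{\,n-d-d^*+2-j}(T),\qquad j=0,1,2,\ldots. \]

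From here the degree drops out. Since $P_{M^*}(Q,T)$ is a genuine polynomial in $Q$, its coefficients satisfy $P_j^*(T)=0$ for $j<0$; reading the functional equation in reverse, every index $i>n-d-d^*+2$ corresponds to some $j<0$ and hence forces $P_i(T)=0$, giving the upper bound $\deg_Q P_M\le n-d-d^*+2$. To see the bound is attained I would show that the constant term $P_0(T)$ of the zeta polynomial is nonzero. Comparing the coefficient of $S^{n-d}$ on both sides of $\chi_M=\sum_iP_iX_{n,d+i}$, and using that every higher basis vector $X_{n,d+i}$ with $i\geq1$ has vanishing $S^{n-d}$-coefficient (its minimum distance $d+i$ forces $\chi_{i'}=0$ for $n-d-i<i'<n$ by Theorem \ref{mu-chi}, and $n-d$ lies in this range), one obtains $\chi_{n-d}(T)=P_0(T)\,c$, where $c\neq0$ is the coefficient of $S^{n-d}$ in $X_{n,d}$. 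Moreover $\chi_{n-d}(T)\neq0$ precisely because $d$ is the \emph{true} minimum cocircuit size of $M$ (otherwise the zero range of Theorem \ref{mu-chi} would extend further), so $P_0(T)\neq0$. The same argument applied to $M^*$ gives $P_0^*(T)\neq0$, and the instance $j=0$ of the functional equation, $P_0^*(T)=T^{\,d^*-1-k}P_{n-d-d^*+2}(T)$, then forces $P_{n-d-d^*+2}(T)\neq0$. Hence the degree is exactly $n-d-d^*+2$, which is consistent with the bound $\deg_Q P_M\le n-d$ of the definition and with the uniform case of Proposition \ref{zeta-uniform} (where $d+d^*=n+2$ gives degree $0$).

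I expect the main obstacle to be the functional equation itself. The two substitutions match cleanly via $(S-1)(S'-1)=T$, but the powers of $T$ demand care, because the prefactor in Theorem \ref{chi-dual} uses the global rank $k=r(M)$ whereas each uniform summand $X_{n,d+i}$ carries its own rank $n-d-i+1$; tracking this discrepancy is exactly what produces the exponent $d^*+j-1-k$, and it must be handled correctly at the boundary indices $j=0$ and $i=n-d-d^*+2$. A secondary point, needed only to exclude accidental degeneracy, is the nonvanishing of the extreme coefficients $P_0$ and $P_0^*$; alternatively one may invoke the normalization $P_0(T)=1$ of the two-variable zeta polynomial, which follows from the one-variable case in \cite{duursma:2004}.
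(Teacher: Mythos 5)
Your main argument is correct, and at its core it runs on the same mechanism as the paper's proof: expand $\chi_M$ in the uniform basis of Theorem \ref{uniform-basis}, push that expansion through Theorem \ref{chi-dual} using the self-duality of the family of uniform matroids ($X_{n,e}^*=X_{n,n-e+2}$), and compare with the uniform-basis expansion of $\chi_{M^*}$. The difference is one of organization and completeness. The paper's own proof is a terse endpoint-matching: assuming the top coefficient $P_r(T)\neq 0$, the dualized expansion of $\chi_M$ begins at $X_{n,n-d-r+2}$, which must be the starting term $X_{n,d^*}$ of the expansion of $\chi_{M^*}$, whence $r=n-d-d^*+2$. You instead first derive the coefficientwise functional equation $P_j^*(T)=T^{d^*+j-1-k}P_{n-d-d^*+2-j}(T)$ --- this is exactly the paper's Proposition \ref{zeta-dual}, which the paper states and proves (by the identical computation) only \emph{after} this proposition --- and then read the degree off from it. What your route buys is rigor at the two ends: the vanishing $P_i(T)=0$ for $i>n-d-d^*+2$ follows cleanly from $P_j^*=0$ for $j<0$, and, more importantly, you notice that attaining the degree requires $P_0^*(T)\neq 0$ and you actually prove it (via the coefficient of $S^{n-d^*}$), whereas the paper silently assumes that the expansion of $\chi_{M^*}$ genuinely starts at $X_{n,d^*}$ with nonzero coefficient.

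Two cautions. First, your justification that $\chi_{n-d}(T)\neq 0$ (``otherwise the zero range of Theorem \ref{mu-chi} would extend further'') is not an argument; the honest reason is that the complement of a minimum cocircuit is a hyperplane of size $n-d$, and every flat $x$ with $a(x)=n-d$ contributes to $\chi_{n-d}(T)$ the characteristic polynomial of the interval $[x,\hat{1}]$, which is monic of degree $k-r(x)\geq 1$, so these contributions cannot sum to zero (in coding language, $\chi_{n-d}(T)=A_d(T)\neq 0$). Second, the ``alternative'' normalization $P_0(T)=1$ is false: already for the $[7,4,3]$ Hamming code one has $A_3(T)=7(T-1)=P_0(T)\binom{7}{3}(T-1)$, so $P_0=1/5$; the correct normalization in Duursma's theory is $P(1)=1$, which is of no use here. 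Strike that remark --- your main argument does not depend on it.
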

\begin{proof}
Assume that $P_r(T)$ is not zero and apply Theorem \ref{uniform-basis} to the dual matroid $M^*$. This expression starts with $X_{n,d^*+r}^*$. Since the dual of the uniform matroid is again a uniform matroid, we have $X_{n,d^*+r}^*=X_{n,n-d+2+r}$. So $n+2-d-r=d^*$ and hence $r=n-d-d^*+2$.
\end{proof}
\begin{proposition}\label{zeta-dual}
For the two-variable zeta polynomial of a matroid $M$ and dual $M^*$ we have
\[ P_{M^*}(Q,T)=P_M\left(\frac{1}{TQ},T\right)\, T^{n-k+1-d}Q^{n-d-d^*+2}. \]
\end{proposition}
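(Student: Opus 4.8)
The plan is to avoid manipulating the defining generating function directly and instead to exploit the basis expansion of Theorem~\ref{uniform-basis} together with the coboundary duality of Theorem~\ref{chi-dual}. Write
\[ \chi_M(S,T)=\sum_{i=0}^{r}P_i(T)\,X_{n,d+i},\qquad r=n-d-d^*+2, \]
where $X_{n,d+i}$ is the coboundary polynomial of the uniform matroid on $n$ elements of rank $n-d-i+1$. The point is that each basis element is itself a coboundary polynomial, so I may apply Theorem~\ref{chi-dual} term by term. Two facts drive the computation: first, the dual of a uniform matroid is uniform, with $X_{n,\delta}^{*}=X_{n,n-\delta+2}$ (the identity already used to compute the degree of $P_M$); second, the normalisation $(S-1)^nT^{-k}$ in Theorem~\ref{chi-dual} is keyed to the rank $k$ of $M$, whereas the $i$-th uniform matroid has rank $k_i=n-d-i+1$.

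Carrying this out, applying $(S-1)^nT^{-k}$ and the substitution $S\mapsto(S+T-1)/(S-1)$ to the expansion above and invoking the duality identity for each uniform matroid gives
\[ \chi_{M^*}(S,T)=\sum_{i=0}^{r}P_i(T)\,T^{\,k_i-k}\,X_{n,\,n-d-i+2}(S,T). \]
The extra factor $T^{k_i-k}$ records precisely the mismatch between the rank $k$ of $M$ and the rank $k_i$ of the $i$-th uniform matroid, and is the only subtle bookkeeping in the argument. Reindexing by $j=r-i$ turns the subscript $n-d-i+2$ into $d^*+j$, so the right-hand side is now genuinely expanded on the basis $\{X_{n,d^*+j}\}_{j=0}^{r}$ attached to $M^*$. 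Since the polynomials $X_{n,\delta}$ have pairwise distinct degrees in $S$ they are linearly independent, so matching against $\chi_{M^*}(S,T)=\sum_{j}P_j^{*}(T)X_{n,d^*+j}$ (Theorem~\ref{uniform-basis} applied to $M^*$) forces the coefficientwise relation $P_j^{*}(T)=T^{\,d^*-1-k+j}\,P_{r-j}(T)$.

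It remains to repackage this into the single functional equation. Using $P_{M^*}(Q,T)=\sum_j P_j^{*}(T)Q^{j}$ and $P_M(1/(TQ),T)=\sum_i P_i(T)T^{-i}Q^{-i}$, the index reversal $i\mapsto r-i$ together with the accumulated powers of $T$ assemble exactly into the global prefactor $T^{\,n-k+1-d}Q^{\,n-d-d^*+2}$, yielding the claimed identity; a short check confirms the result is a genuine polynomial in $Q$ of the expected degree $r$, which also uses $d,d^*>2$. I expect the main obstacle to be this $T$-power bookkeeping together with verifying that $j=r-i$ is a bijection between the two basis expansions and that the ranges of $i$ and $j$ coincide; once the factor $T^{k_i-k}$ is pinned down correctly, the rest is formal. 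A more computational alternative would substitute $Q\mapsto 1/(TQ)$ and $S\mapsto(S+T-1)/(S-1)$ directly into the defining generating function, using $(1-\tfrac{1}{TQ})(1-\tfrac1Q)=(1-Q)(1-TQ)/(TQ^2)$ and the identity $1+(\tfrac{S+T-1}{S-1}-1)/(TQ)=(1+(S-1)Q)/((S-1)Q)$; there the obstacle is that the substitution interchanges the expansion of the rational generating function at $Q=0$ with that at $Q=\infty$, so one must control the residues at $Q=1$ and $Q=1/T$ that account for the $S^n$ correction terms.
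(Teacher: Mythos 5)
Your proof is correct and takes essentially the same route as the paper: apply Theorem~\ref{chi-dual} term by term to the expansion of Theorem~\ref{uniform-basis}, use $X_{n,\delta}^*=X_{n,n-\delta+2}$ while tracking the rank mismatch $T^{k_i-k}$, reindex by $j=r-i$, and match coefficients against the expansion of $\chi_{M^*}$. One minor slip in your justification: the polynomials $X_{n,\delta}$ all have the same $S$-degree $n$, so their linear independence follows not from distinct degrees but from triangularity (the coefficient of $S^{n-w}$ in $X_{n,\delta}$ vanishes for $0<w<\delta$ and is nonzero for $w=\delta$), a point the paper leaves implicit in calling Theorem~\ref{uniform-basis} an expansion on a \emph{basis}.
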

\begin{proof}
Apply Theorem \ref{chi-dual} to the expression in Theorem \ref{uniform-basis}. This gives that $\chi_{M^*}(S,T)$ is equal to
\[ \begin{array}{cl}
 & (S-1)^nT^{-k}\,\chi_M\left(\frac{S+T-1}{S-1},T\right) \\
= & (S-1)^nT^{-k}\left( P_0(T)\,X_{n,d}\left(\frac{S+T-1}{S-1},T\right)+\ldots+P_r(T)\,X_{n,d+r}\left(\frac{S+T-1}{S-1},T\right) \right) \\
= & T^{-k}\left( P_r(T)\,T^{n-d-r+1}\,X_{n,n-d+2-r}+\ldots+P_0(T)\,T^{n-d-1}\,X_{n,n-d+2}\right)
\end{array} \]
and the Proposition follows.
\end{proof}

We are now ready to give an alternative proof of Theorem \ref{bound} using the two-variable zeta polynomial.

\begin{proof}[Theorem \ref{bound}]
Our goal is to determine all the coefficients $P_j(T)$ of the two-variable zeta polynomial, and thus the coboundary polynomial $\chi_M(S,T)$. Denote the coefficient of $S^j$ in $X_{n,d}$ by $X_{n,d,j}$. We know the exact value of these coefficients, just like we know the extended weight enumerator of MDS codes:
\[ \chi_j(T)={n\choose j}(T-1)\sum_{t=0}^{n-j-d}(-1)^t{n-j-1\choose t}T^{n-j-d-t}. \]
So we can split up Theorem \ref{uniform-basis} in $n+1$ equations:
\[ \chi_j(T)=\sum_{i=0}^{n-d-d^*+2}P_i(T)\,X_{n,d+i,j},\qquad j=0,\ldots,n. \]
Not all of these equations are helpful in determining the $P_i(T)$. For $j<d^*-1$ and $j>n-d$ the $\chi_j(T)$ are known by Theorem \ref{mu-chi}. In the case $n-d<j<n$ we have $\chi_j(T)=0$ and also $X_{n,d+i,j}=0$ for all $i$, so the corresponding equations just state $0=0$. For $d^*-1\leq j\leq n-d$ we don't know $\chi_j(T)$, so these equations are also not helpful. We are left with the equations for $j<d^*-1$ and $j=n$, so $d^*$ equations in the $n-d-d^*+3$ unknown $P_i(T)$. \\
We can do the same for the dual matroid, leading to $d$ equations in the $n-d-d^*+3$ unknown $P_i^*(T)$. From Proposition \ref{zeta-dual} it follows that
\[ P_i^*(T)=T^{i-k-1+d^*}P_{n-d-d^*+2-i}(T), \]
so we can replace the $P_i^*(T)$ one-to-one by the appropriate $P_i(T)$. So all together, we have $d+d^*$ equations in $n-d-d^*+3$ unknown $P_i(T)$. To get at least as many equations as unknowns, we need
\begin{eqnarray*}
d+d^* & \geq & n-d-d^*+3 \\
2(d+d^*) & \geq & n+3.
\end{eqnarray*}
This is the same bound we already obtained in Theorem \ref{bound}.
\end{proof}

\section{Open questions}

We have seen two methods to determine the coboundary polynomial $\chi_M(S,T)$ of a matroid from the M\"obius polynomials $\mu_M(S,T)$ and $\mu_{M^*}(S,T)$ of a matroid and its dual. Both methods rely on duality relations, for, respectively, the coboundary and Tutte polynomial. \\
The logical question is now: how sharp is the bound in Theorem \ref{bound}? To look for an example to show the bound is tight, we need two matroids with the same parameters and $2(d+d^*)<n+3$ that have equal M\"obius polynomials $\mu_M(S,T)$ and $\mu_{M^*}(S,T)$ but different coboundary polynomial $\chi_M(S,T)$. The smallest case is $d=d^*=3$ (because otherwise the matroid is not simple) and thus $n=10$. \\
An exhaustive computer search on 260 random matrices with the desired parameters and $k=5$ did not lead to such an example. So there is room for improvement on the Main Question. \\

In Proposition 6.3 of \cite{brylawski:1980a} the issue is addressed how many Tutte polynomials there are, given the size and rank of a matroid. This is done by looking at the affine space generated by the coefficients of the Tutte polynomial, and determining its dimension. It would be interesting to see if we can do the same thing for the M\"obius polynomial, given $n$, $k$, $d$ and $d^*$. If we determine the dimension of the affine space generated by the coefficients of the M\"obius polynomial of a matroid and its dual, we can compare it to the dimension for the Tutte polynomial. This could give us more information about the Main Question in general.

\section*{Acknowledgment}

The author would like to thank Ruud Pellikaan for stating the Main Question, and for valuable comments on this paper.

\bibliographystyle{plain}
\bibliography{wtenum}

\end{document}